\newtheorem{theorem}{Theorem}[section]
\newtheorem{lemma}[theorem]{Lemma}
\newtheorem{remark}[theorem]{Remark}
\def\n{\textbf{\textit{n}}}
\def\R3{\mathbb{R}^3}
\def\F2o{\overline{F_2}}
\def\d{{\rm d}}
\def\L2{L^2(\Omega)}
\def \au {\rm}
\def \ti {\it}
\def \jou {\rm}
\def \bk {\it}
\def \no#1#2#3 {{\bf #1} (#3), #2.}
\def \eds#1#2#3 {#1, #2, #3.}
\def \nome#1#2 {{\bf #1}, (#2).}
\begin{document}
\title[On the nonlocal Cahn-Hilliard equation in three dimensions]{On the separation property and the global attractor for \\ the nonlocal Cahn-Hilliard equation in three dimensions}
\author{\textsc{Andrea Giorgini}}
\address{Politecnico di Milano\\ 
Dipartimento di Matematica\\
Via E. Bonardi 9, Milano 20133, Italy\\
\href{mailto:andrea.giorgini@polimi.it}{andrea.giorgini@polimi.it}
}


\begin{abstract}
In this note, we consider the nonlocal Cahn-Hilliard equation with constant mobility and singular potential in three dimensional bounded and smooth domains. Given any global solution (whose existence and uniqueness are already known), we prove the so-called {\it instantaneous} and {\it uniform} separation property: any global solution with initial finite energy is globally confined (in the $L^\infty$ metric) in the interval $[-1+\delta,1-\delta]$ on the time interval $[\tau,\infty)$ for any $\tau>0$, where $\delta$ only depends on the norms of the initial datum, $\tau$ and the parameters of the system. We then exploit such result to improve the regularity of the global attractor for the dynamical system associated to the problem. 
\end{abstract}

\maketitle
\tableofcontents

\section{Introduction and main results}

We study the nonlocal Cahn-Hilliard equation (see \cite{GL1997,GL1998, GGG}) 
\begin{equation}  \label{nCH}
\partial_t \phi = \Delta \left( F^{\prime}(\phi)-J\ast \phi \right) \quad 
\text{in } \Omega \times (0,\infty),
\end{equation}
where $\Omega$ is a smooth and bounded domain in $\mathbb{R}^3$. The state variable $\phi$ represents the difference of the concentrations of two fluids. This equation is commonly rewritten as 
\begin{equation}  \label{nCH-2}
\partial_t \phi = \Delta \mu, \quad \mu = F^{\prime}(\phi)-J\ast \phi  \quad 
\text{in } \Omega \times (0,\infty),
\end{equation}
which is equipped with the following boundary and initial conditions 
\begin{equation}  \label{nCH-mu}
\partial_\n \mu=0 \quad \text{on } \partial \Omega \times (0,T),\quad
\phi(\cdot,0)= \phi_0 \quad \text{in } \Omega,
\end{equation}
where $\n$ is the outward normal vector on $\partial \Omega$. The physically relevant form of the nonlinear function $F$ is given by the convex part of the Flory-Huggins (also Boltzmann-Gibbs entropy) potential
\begin{equation}
\label{f-log}
F(s)=\frac{\theta}{2}\bigg[(1+s)\ln
(1+s)+(1-s)\ln (1-s)\bigg],\quad s\in [-1,1].
\end{equation}
The function $J:\mathbb{R}^3 \to \mathbb{R}$ is a (sufficiently smooth) interaction kernel such that $J(x)=J(-x)$. The notation $(J \ast \phi) (x)$ stands for  $\int_{\Omega} J(x-y)\phi(y)\, \d y$. The system \eqref{nCH-2}-\eqref{nCH-mu} is a gradient flow with respect to the metric of $H_{(0)}^1(\Omega)'$, namely the dual of $H^1(\Omega)$ with zero mean value, associated to the free energy
\begin{equation} 
\label{nfree}
\begin{split}
E_{NL}(\phi )
&= -\frac{1}{2}\int_{\Omega \times \Omega }J(x-y) \phi(y)\phi(x)\, \mathrm{d}
x \, \mathrm{d} y + \int_\Omega F(\phi (x))\, \mathrm{d}x\\
&=\frac{1}{4}\int_{\Omega \times \Omega }J(x-y) \vert \phi(y) -
\phi(x)\vert^2\, \mathrm{d} x \, \mathrm{d} y + \int_\Omega F (\phi (x)) -%
\frac{a(x)}{2}\phi^2(x) \, \mathrm{d}x,
\end{split}
\end{equation}
where $a(x)=(J\ast 1)(x)= \int_{\Omega} J(x-y) \, \d y$ for $x\in \Omega$. The function $\mu$ appearing in \eqref{nCH-2} is the so-called chemical potential,  which corresponds to $\frac{\delta E_{NL}\left( \phi\right) }{\delta \phi}$.

The analysis of the nonlocal Cahn-Hilliard equation with logarithmic potential \eqref{Log} (actually a more general class of {\it singular} potentials) has been firstly studied in \cite{GGG} (see also \cite{FG} for another proof of existence and \cite{GG2014} for the viscous case).
In particular, the authors in \cite{GGG} proved the existence and uniqueness of global weak solutions and their propagation of regularity for positive times (see proof of Theorem \ref{SPHR} below for more details). Such solutions satisfy
\begin{equation}  \label{bound}
\phi \in L^{\infty }(\Omega \times (0,\infty))\text{ with }|\phi (x,t)| < 1%
\text{ for a.e. } x \in \Omega, \, \forall \, t >0.
\end{equation}
Such property has an important physical meaning since the solution $\phi$ takes value in the significant interval $[-1,1]$ (cf. definition of $\phi$). Concerning the regularity of the global solutions, a main task consists in establishing $L^p$ estimates of  $F^{\prime\prime}(\phi)$ and $F^{\prime\prime\prime}(\phi)$, which are needed to prove the existence of {\it classical} solutions. This is a difficult question due to the growth conditions 
\begin{equation}  \label{growth-bounds}
F^{\prime\prime}(s)\leq C\mathrm{e}^{C |F^{\prime}(s)|}, \quad
|F^{\prime\prime\prime}(s)|\leq C F^{\prime\prime}(s)^2,
\end{equation}
which prevent the possibility to control $F^{\prime\prime}(\phi)$ or $%
F^{\prime\prime\prime}(\phi)$ in $L^p$ spaces in terms of some $L^p$ norms
of $F^{\prime}(\phi)$ (as possible in the case of potential with polynomial growth). However, although $L^p$ estimates of $F^{\prime\prime}(\phi)$ and $F^{\prime\prime\prime}(\phi)$ can be useful, this is not sufficient (in many cases) to prove higher order regularity, and it is necesssary to show the {\it instanteneous} (also called {\it strict}) {\it separation property}: for any $\tau >0$, there exists $\delta=\delta(\tau) \in (0,1)$ such that
\begin{equation}
\left\vert \phi (x,t)\right\vert \leq 1-\delta ,\text{ for all }(x,t)\in
\Omega \times (\tau ,\infty).  \label{sep-prop}
\end{equation}
We point out that the separation property is expected due to the gradient flow structure of the Cahn-Hilliard model, which drives the dynamics
towards stationary states of the free energy consisting of \textit{separated}
functional minima. In \cite{GGG}, a first proof of \eqref{sep-prop} has been established in \cite[Theorem 5.2]{GGG} in the two dimensional case. The argument hinges upon an iterative Alikakos-Moser argument for the powers of $F^{\prime}(\phi)$ combined with Gagliardo-Nirenberg
interpolation inequalities and the Trudinger-Moser inequality. 
A new proof of such result admitting a more general class of singular potentials has been proposed in \cite[Section 4]{GGG2}. The latter relies on a De Giorgi's iterative argument. This method is usually employed to obtain an $L^\infty$ estimate of the solution to a second order PDE, thereby the main achievement in \cite{GGG2} was to recast the method in order to get a {\it specific} bound (cf. \eqref{sep-prop} with \eqref{bound}). More recently, the separation property has been proven in three dimensions in \cite{P}, which allowed to show the convergence to stationary states. The author in \cite{P} improved the method in \cite{GGG2} in two ways: the truncated functions $\phi_n$ (see proof of Theorem \ref{SPHR} below) are shown to be bounded by $2\delta$ (instead of $1$ as in \cite{GGG2}) and a Poincar\'{e} type inequality for time-dependent functions is employed to avoid the integrals of  $\overline{\phi_n}$ (see term $Z_2$ in \cite[Section 4]{GGG2}). However, a main drawback of the argument, which is due to the latter ingredient, is that the value of $\delta$ in \eqref{sep-prop} depends on the particular solution. More precisely, $\delta$ cannot be estimated only in terms of {\it norm} of the initial data and the parameters of the system. The purpose of this work is to demonstrate that the De Giorgi iterative scheme in \cite{GGG2} and the observation $\| \phi_n\|_{L^\infty}\leq 2\delta$ are sufficient to achieve \eqref{sep-prop} with a value $\delta$ which depends on $\tau$, the initial energy $E_{NL}(\phi_0)$) and the parameters of the system (e.g. $F, \Omega, J$). Beyond its intrinsic interest, this allows us to improve the regularity of the global attractor for the dynamical system associated to the system \eqref{nCH-2}-\eqref{nCH-mu}.

In order to present the main results of this work, let us formulate the assumptions for the admissible class of potentials:  

\begin{itemize}
\item[(\textbf{A1})] $F\in C \left( \left[ -1,1\right]\right) \cap
C^{2}\left( -1,1\right)$ such that $\lim_{\left\vert s\right\vert \rightarrow 1}F^{\prime}\left( s\right) =\pm \infty$ and $F^{\prime\prime}(s)\geq \theta>0$ for all $s\in (-1,1)$.


\item[(\textbf{A2})] There exists $\varepsilon_0 > 0$ such that $F''$
is monotone non-decreasing on $[1-\varepsilon_0,1)$ and non-increasing in 
$(-1,1+\varepsilon_0]$.

\item[(\textbf{A3})] There exist $\varepsilon_1 \in (0,\frac12)$ and $C_F \geq 1$ such that
\begin{equation}
\label{o-gr}
\frac{1}{F'(1-2\delta)} \leq \frac{C_F}{|\ln(\delta)|}, \quad \frac{1}{F'(-1+2\delta)}\leq \frac{C_F}{|\ln(\delta)|}, \quad \forall \, 0 < \delta \leq \varepsilon_1
\end{equation}
and
\begin{equation}
\label{o-gr2}
\frac{1}{F''(1-2\delta)}\leq C_F \delta, \quad 
\frac{1}{F''(-1+2\delta)}\leq C_F \delta, \quad \forall \, 0 < \delta \leq \varepsilon_1.
\end{equation}
\end{itemize}

\begin{remark}
The assumptions (\textbf{A1})-(\textbf{A3}) are satisfied by the convex part of the Flory-Huggins potential \eqref{f-log}. 
\end{remark}

The main result reads as follows

\begin{theorem}
\label{SPHR} Assume that (\textbf{A1})-(\textbf{A3}) hold. Let $J$ be $%
W_{\rm loc}^{1,1}(\mathbb{R}^{3})$ such that $J(x)=J(-x)$ for all $x\in 
\mathbb{R}^{3}$. Assume that $\phi _{0}\in L^{\infty }(\Omega )$ such that 
$\Vert \phi _{0}\Vert _{L^{\infty }(\Omega )}\leq 1$ and $|\overline{\phi _{0}%
}|=|\Omega|^{-1} \left|\int_\Omega \phi_0(x) \, \d x\right|<1$. Then, for any $\tau >0$, there exists $\delta \in (0,1)$ such that the unique global solution to 
\eqref{nCH-2}-\eqref{nCH-mu} satisfies 
\begin{equation}
\label{SP}
\left\vert \phi (x,t)\right\vert \leq 1-\delta ,\text{ for a.e. }(x,t)\in
\Omega \times [\tau ,\infty).
\end{equation}
In addition, there exists three positive constants $C_1, C_2, C_3$ and $\alpha \in (0,1)$ such that
\begin{equation}
\label{MU-infty}
\sup_{t\geq \tau} \| \mu(t)\|_{L^\infty(\Omega)} \leq C_1, \quad  \sup_{t\geq \tau} \| \partial_t \mu\|_{L^{2}(t,t+1;L^{2}(\Omega)}\leq C_2,
\end{equation}
and
\begin{equation}
\label{PHI-hol}
|\phi(x_1,t_1)- \phi(x_2,t_2)|\leq C_3 \left( |x_1-x_2|^\alpha+ |t_1-t_2|^{\frac{\alpha}{2}}\right),
\end{equation}
for any $(x_1,t_1), (x_2,t_2)\in \Omega_t=\overline{\Omega}\times [t,t+1]$, for any $t \geq \tau$. The values of $\delta$, $C_1, C_2, C_3$ and $\alpha$ only depend on $\tau$, $\delta$, the initial energy $E_{NL}(\phi_0)$, the initial mean $\overline{\phi_0}$ and the parameters of the system (i.e. $F$, $J$, $\Omega$). 
\end{theorem}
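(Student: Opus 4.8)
The plan is to establish the uniform separation property \eqref{SP} via a De Giorgi iteration performed directly on suitable truncations of $\phi$ near the pure phases $\pm 1$, following the scheme of \cite{GGG2} but keeping careful track of the dependence of all constants. I would work near the value $+1$ (the argument near $-1$ being symmetric). Fix $k \in \mathbb{N}$ and levels $1 - 2\delta \le s_k = 1 - \delta(1 + 2^{-k}) \nearrow 1 - \delta$, and introduce the truncated functions $\phi_k = (\phi - s_k)_+$, which by construction satisfy $0 \le \phi_k \le 2\delta$ pointwise, so that $\|\phi_k\|_{L^\infty} \le 2\delta$; this is the key observation borrowed from \cite{P}. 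The energy-type level sets $A_k(t) = \{x : \phi(x,t) > s_k\}$ and the quantities $y_k = \sup_{t \ge \tau/2} \int_t^{t+1}\!\!\int_\Omega \phi_k^2 \,\d x\, \d t + \sup_{t \ge \tau/2}\int_\Omega \phi_k^2(t)\,\d x$ are the objects for which I want a nonlinear recursive inequality $y_{k+1} \le C\, b^k\, y_k^{1+\varepsilon}$ with $C, b, \varepsilon$ depending only on the admissible data.

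The core computation is to test the weak formulation of \eqref{nCH-2} with $\phi_k$ (the admissible multiplier since $\partial_\n \mu = 0$), which produces, after integration by parts, a good term $\int F''(\phi)|\nabla \phi_k|^2 \ge \theta \int |\nabla \phi_k|^2$ on the left and, on the right, terms involving $\nabla(J \ast \phi)\cdot \nabla \phi_k$ together with a boundary/time-derivative contribution $\frac12 \ddt \|\phi_k\|_{L^2}^2$. Here assumption (\textbf{A3}) enters decisively: the bound \eqref{o-gr2} on $1/F''(1-2\delta)$ lets one absorb factors of $\delta$ coming from the small measure of $A_k$, while \eqref{o-gr} controls the boundary term arising when one writes the equation tested against $\phi_k$ in terms of $F'(\phi) - J\ast\phi$ — more precisely, on $A_k$ one has $F'(\phi) \ge F'(s_k) \ge F'(1-2\delta)$, and the logarithmic lower bound $F'(1-2\delta) \gtrsim |\ln\delta|/C_F$ converts into a factor $|\ln\delta|^{-1}$ that, combined with the smallness of $|A_k|$, beats the geometric factor $2^k$ in the iteration. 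I would use $W^{1,1}_{\rm loc}$ regularity of $J$ (so $\nabla(J\ast\phi) \in L^\infty$) together with the conservation of mass $\overline{\phi(t)} = \overline{\phi_0}$ to control the convolution terms, and Sobolev/Gagliardo–Nirenberg embeddings in $\mathbb{R}^3$ (hence the $1+\varepsilon$ exponent with $\varepsilon$ depending on the dimension) to close the recursion. The initialization $y_0$ at level $1 - 2\delta$ is bounded by a constant depending only on $E_{NL}(\phi_0)$, $\overline{\phi_0}$, $\tau$, and the parameters, using the dissipation of the free energy and the uniform-in-time bounds on $\phi$ from \cite{GGG}; the standard De Giorgi fast-geometric-convergence lemma then forces $y_{k} \to 0$ provided $\delta$ is chosen small enough (depending only on those quantities), yielding $\phi \le 1 - \delta$ a.e.\ on $\Omega \times [\tau, \infty)$, and symmetrically $\phi \ge -1 + \delta$.

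Once \eqref{SP} is in hand, the remaining assertions follow by a bootstrap that is essentially routine given that $F', F'', F'''$ are now bounded on $[-1+\delta, 1-\delta]$, so the equation is uniformly parabolic and non-degenerate on $[\tau,\infty)$. For \eqref{MU-infty}, I would first note $\mu = F'(\phi) - J\ast\phi \in L^\infty$ simply because $F'(\phi)$ is bounded once the separation holds and $J\ast\phi$ is bounded by Young's inequality and the mass bound; the $L^2$-in-time bound on $\partial_t \mu$ comes from differentiating \eqref{nCH-2} in time, or more cleanly from the standard higher-order energy estimate $\int_t^{t+1}\|\partial_t\phi\|_{H^1(\Omega)'}^2 \le C$ upgraded using $\partial_t\mu = F''(\phi)\partial_t\phi - J\ast\partial_t\phi$ with $F''(\phi)$ now bounded. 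For the Hölder estimate \eqref{PHI-hol}, the idea is that $\phi$ solves $\partial_t\phi = \Delta\mu = \mathrm{div}(F''(\phi)\nabla\phi - \nabla(J\ast\phi))$, a uniformly parabolic equation in divergence form with bounded measurable coefficient $F''(\phi)$ and bounded right-hand side, to which the De Giorgi–Nash–Moser interior and up-to-the-boundary Hölder theory applies (the Neumann condition being handled by even reflection), giving \eqref{PHI-hol} on each $\Omega_t = \overline\Omega \times [t, t+1]$ with constants uniform in $t \ge \tau$ because all the structural bounds are. The main obstacle throughout is the first paragraph's iteration: making the recursive inequality genuinely \emph{autonomous} — i.e.\ ensuring every constant that appears depends only on $E_{NL}(\phi_0)$, $\overline{\phi_0}$, $\tau$ and $F, J, \Omega$, and \emph{not} on the particular trajectory — which is exactly the improvement over \cite{P} and hinges on using the $\|\phi_k\|_{L^\infty} \le 2\delta$ bound rather than a time-dependent Poincaré inequality to eliminate the troublesome mean-value terms.
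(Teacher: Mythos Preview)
Your high-level strategy matches the paper's: De Giorgi iteration on the truncations $\phi_k=(\phi-s_k)_+$, exploiting $\|\phi_k\|_{L^\infty}\le 2\delta$, the growth conditions (\textbf{A3}), and Gagliardo--Nirenberg in three dimensions, followed by the routine bootstrap once separation holds. The post-separation part of your outline (the $L^\infty$ bound on $\mu$, the $L^2_tL^2_x$ bound on $\partial_t\mu$ via $\partial_t\mu=F''(\phi)\partial_t\phi-J\ast\partial_t\phi$, and the De Giorgi--Nash--Moser H\"older theory for the divergence-form equation) is essentially what the paper does.

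There are, however, two genuine imprecisions in the iteration itself. First, your coercivity step reads $\int F''(\phi)|\nabla\phi_k|^2\ge\theta\int|\nabla\phi_k|^2$, but $\theta$ alone is not strong enough to close the scheme: on $A_k$ one has $\phi\ge 1-2\delta$, so by (\textbf{A2}) the correct lower bound is $F''(\phi)\ge F''(1-2\delta)$, and it is precisely the blow-up $F''(1-2\delta)\ge (C_F\delta)^{-1}$ from \eqref{o-gr2} that compensates the $\delta$-factors in the recursion. The condition \eqref{o-gr} on $F'(1-2\delta)$ is not used to control a ``boundary term'' as you write, but only at the very end to estimate the measure of the initial level set $y_0$ via $|A_0(s)|\le\|F'(\phi(s))\|_{L^1}/F'(1-2\delta)$.

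Second, and more seriously, your scheme omits the time localization. The paper iterates simultaneously in level \emph{and} in time, introducing cutoffs $\eta_n\in C^1(\mathbb R)$ supported in shrinking intervals $[t_{n-1},T]$ and testing with $\phi_n\eta_n^2$. This is what kills the initial-time contribution (since $\eta_n(t_{n-1})=0$) at the price of the term $\int\|\phi_n\|_{L^2}^2\eta_n\partial_t\eta_n\le \tfrac{2^{n+1}}{\widetilde\tau}(2\delta)^2 y_n$, where the $\|\phi_n\|_{L^\infty}\le 2\delta$ observation is used. With your quantity $y_k=\sup_{t\ge\tau/2}\int_t^{t+1}\!\int\phi_k^2+\sup_{t\ge\tau/2}\int\phi_k^2(t)$ and no cutoff, integrating $\tfrac12\ddt\|\phi_k\|^2$ leaves a term $\|\phi_k(t_0)\|_{L^2}^2$ on the right which is only $\le y_k$ linearly, so the superlinear recursion $y_{k+1}\le C b^k y_k^{1+\varepsilon}$ does not follow. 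The paper's iteration quantity is in fact the measure $y_n=\int_{I_n}\!\int_{A_n(s)}1$, and a further technical point you do not mention is that the time-window $\widetilde\tau$ must itself be coupled to $\delta$ (roughly $\widetilde\tau\sim\delta|\ln\delta|$) so that the two constraints---smallness of $y_0$ and the form of $X_n$---are simultaneously satisfiable; one then covers $[\tau,\infty)$ by translates of length $\widetilde\tau$.
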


\begin{remark}
A combination of the separation property \eqref{SP} and the H\"{o}lder regularity \eqref{PHI-hol} gives the following stronger result
\begin{equation}
\label{SP-2}
\left\vert \phi (x,t)\right\vert \leq 1-\delta, \quad \forall \, (x,t) \in \overline{\Omega}\times [\tau,\infty).
\end{equation}
\end{remark}


As a direct consequence of Theorem \ref{SPHR}, we infer additional features of the longtime behavior of the solutions of system \eqref{nCH-2}-\eqref{nCH-mu}.
Let us introduce the dynamical system associated with problem \eqref{nCH-2}-\eqref{nCH-mu}. For any given $m \in (0,1)$, we define the phase space
\begin{equation}
\mathcal{H}_{m}=\left\{ \phi \in L^\infty(\Omega):
\|\phi\|_{L^{\infty}(\Omega)}\leq 1 \text{ and } -1+m \leq \overline{\phi}\leq 1-m \right\}  \label{defhk}
\end{equation}%
endowed with the metric
\begin{equation}
\mathbf{d}(\phi _{1},\phi _{2})=\Vert \phi _{1} -\phi _{2}\Vert_{L^2(\Omega)}.  \label{metric}
\end{equation}%
The pair $(\mathcal{H}_{m}, \mathbf{d})$ is a complete metric space.
Then, we define the map
\begin{equation*}
S(t):\mathcal{H}_{m}\rightarrow \mathcal{H}_{m},\quad
S(t)\phi_{0}=\phi (t),\quad \forall \ t\geq 0,
\end{equation*}%
where $\phi $ is the global (weak) solution (see \cite[Theorem 3.4]{GGG}) originating from the initial condition $\phi _{0}$. It was shown in \cite[Section 4]{GGG} that $(\mathcal{H}_{m},S(t))$ is a dissipative dynamical system and $S(t)$ is a closed semigroup on the phase space $\mathcal{H}_{m}$ (see \cite{PZ}). Furthermore, the existence of the global attractor $\mathcal{A}_m$  was proven in \cite[Theorem 4.4]{GGG}. In particular, it is shown that $\mathcal{A}_m$ is a bounded set in $\mathcal{H}_m\cap H^1(\Omega)$. 
Our next result is concerned with the regularity of the global attractor $\mathcal{A}_m$.

\begin{theorem}
\label{GA}
Let (\textbf{A1})-(\textbf{A3}) hold. Assume that $J\in W_{\rm loc}^{1,1}(\mathbb{R}^{3})$ such that $J(x)=J(-x)$ for all $x\in 
\mathbb{R}^{3}$. Consider the connected global attractor $\mathcal{A}_m$ associated with the dynamical system $(\mathcal{H}_{\kappa },S(t))$. Then, $\mathcal{A}_m \subset B_{L^\infty(\Omega)}(0,1-\delta)$ and is bounded in $C^{\alpha}(\overline{\Omega})$.
\end{theorem}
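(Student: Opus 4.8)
The plan is to obtain Theorem \ref{GA} as a direct consequence of Theorem \ref{SPHR} together with the invariance of the global attractor. The only thing to verify is that, for initial data ranging over $\mathcal{A}_m$, the hypotheses of Theorem \ref{SPHR} hold and the constants $\delta, C_3, \alpha$ it produces can be chosen independently of the particular data.

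First I would record the uniform bounds available on the phase space $\mathcal{H}_m$. Every $\phi_0 \in \mathcal{H}_m$ satisfies $\|\phi_0\|_{L^\infty(\Omega)} \le 1$ and $-1+m \le \overline{\phi_0} \le 1-m$, so in particular $|\overline{\phi_0}| \le 1-m < 1$. Moreover, inspecting \eqref{nfree} and using that $J \in L^1_{\mathrm{loc}}(\mathbb{R}^3)$ with $\Omega$ bounded (so that $z \mapsto |J(z)|$ is integrable on the bounded set $\{x-y : x,y \in \Omega\}$, whence the nonlocal term is controlled by a constant and $a = J\ast 1 \in L^\infty(\Omega)$), and that $F \in C([-1,1])$ by (\textbf{A1}), one gets $E_{NL}(\phi_0) \le M$ for a constant $M = M(F,J,\Omega)$, uniformly over $\phi_0 \in \mathcal{H}_m$. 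Hence the data-dependent quantities entering Theorem \ref{SPHR}, namely $\tau$, $E_{NL}(\phi_0)$ and $\overline{\phi_0}$, stay in a fixed range once $\tau$ is fixed: a single value of $\tau$, a half-line bounded above by $M$, and the compact interval $[-1+m, 1-m]$.

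Next I would invoke invariance. Since $(\mathcal{H}_m, S(t))$ is a closed semigroup possessing the global attractor $\mathcal{A}_m$ (cf. \cite[Section 4]{GGG} and \cite{PZ}), one has $S(\tau)\mathcal{A}_m = \mathcal{A}_m$ for every $\tau \ge 0$. Fix $\tau = 1$. Given any $\psi \in \mathcal{A}_m$, there is $\phi_0 \in \mathcal{A}_m \subset \mathcal{H}_m$ with $S(1)\phi_0 = \psi$; applying Theorem \ref{SPHR} to the solution $\phi(t) = S(t)\phi_0$ and evaluating \eqref{SP} at $t = 1$ yields $\|\psi\|_{L^\infty(\Omega)} \le 1-\delta$, while \eqref{PHI-hol} with $t = 1$ gives $|\psi(x_1) - \psi(x_2)| \le C_3 |x_1 - x_2|^\alpha$ for all $x_1, x_2 \in \overline{\Omega}$, hence $\|\psi\|_{C^\alpha(\overline{\Omega})} \le 1 + C_3$. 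By the previous paragraph, $\delta, C_3, \alpha$ here depend only on $m$ and on $F, J, \Omega$, so they are the same for every $\psi \in \mathcal{A}_m$. Since $\psi \in \mathcal{A}_m$ was arbitrary, $\mathcal{A}_m \subset B_{L^\infty(\Omega)}(0, 1-\delta)$ and $\mathcal{A}_m$ is bounded in $C^\alpha(\overline{\Omega})$.

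The step I expect to be delicate — and the one that must be made precise — is exactly the uniformity asserted above: one should check, from the proof of Theorem \ref{SPHR}, that $\delta$ (and likewise $C_3$ and $\alpha$) depends on $E_{NL}(\phi_0)$ only through an upper bound and on $\overline{\phi_0}$ only through a lower bound on $1 - |\overline{\phi_0}|$, so that the a priori bounds $E_{NL}(\phi_0) \le M$ and $1 - |\overline{\phi_0}| \ge m$ valid on $\mathcal{A}_m$ indeed produce a single admissible choice of $\delta, C_3, \alpha$. Such monotone dependence is the natural output of a De Giorgi iteration, so I would expect it to hold; granting it, the argument is complete. Note that the connectedness of $\mathcal{A}_m$, although already established, plays no role in the regularity statement.
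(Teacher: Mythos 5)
Your proposal is correct and follows essentially the same route as the paper: bound $E_{NL}$ and $\overline{\phi_0}$ uniformly on $\mathcal{A}_m$ (respectively by a constant depending only on $F$, $J$, $\Omega$, and by $1-m$), use the invariance $\mathcal{A}_m=S(1)\mathcal{A}_m$, and apply Theorem \ref{SPHR} with $\tau=1$ to get the uniform $L^\infty$ separation and $C^\alpha$ bounds. The uniformity you flag as delicate is exactly what the paper relies on, having noted in the proof of Theorem \ref{SPHR} that the constants depend on the data only through $E_{NL}(\phi_0)$, $\tau$ and $\overline{\phi_0}$, uniformly when $\overline{\phi_0}$ ranges in a compact subset of $(-1,1)$.
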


Before proceeding with the proofs of the main results, it worth presenting a wider picture about the validity of the separation property for other Cahn-Hilliard equations. First, we recall the nonlocal Cahn-Hilliard equation with non-constant degenerate mobility
\begin{equation}  \label{nCH-deg}
\partial_t \phi = {\rm div}\left( (1-\phi^2)\nabla \mu\right), \quad \mu = F^{\prime}(\phi)-J\ast \phi  \quad 
\text{in } \Omega \times (0,\infty),
\end{equation}
which is completed with \eqref{nCH-mu}. In this case, the separation property has been previously proven by \cite{LP} in both two and three dimensions (see also \cite{FGG}). Next, we consider the (local) Cahn-Hilliard equation \cite{Ca,CH,CH2} (see also \cite%
{E,Mbook}) with constant mobility 
\begin{equation}
\partial _{t}\phi =\Delta \left( -\Delta \phi +\Psi ^{\prime }(\phi )\right)
\quad \text{in }\Omega \times (0,T),  \label{CH}
\end{equation}%
subject to the classical boundary and initial conditions 
\begin{equation}
\partial _{\n}\phi =\partial _{\n} \Delta
\phi =0\quad \text{on }\partial \Omega \times (0,T),\quad \phi (\cdot
,0)=\phi _{0}\quad \text{in }\Omega, \label{CH-bc-o}
\end{equation}
where $\Psi $ is the Flory-Huggins
potential defined by 
\begin{equation}
\Psi (s)=F(s)-\frac{\theta _{0}}{2}s^{2}=\frac{\theta }{2}\bigg[(1+s)\ln
(1+s)+(1-s)\ln (1-s)\bigg]-\frac{\theta _{0}}{2}s^{2},\quad s\in \lbrack
-1,1], \label{Log}
\end{equation}%
with constant parameters $\theta $ and $\theta _{0}$ fulfilling the
conditions $0<\theta <\theta _{0}$. The Cahn-Hilliard
system \eqref{CH} is the gradient flow with respect to the $%
H_{(0)}^{1}(\Omega )^{\prime }$ metric of the total free energy 
\begin{equation}  \label{free}
E_{L}(\phi )=\int_{\Omega }\frac{1}{2}|\nabla \phi |^{2}+\Psi (\phi (x)) \, 
\mathrm{d}x.
\end{equation}%
The separation property \eqref{sep-prop} for \eqref{CH}-\eqref{CH-bc-o} was first established in \cite{DD} and \cite{MZ} in one and two dimensions, respectively. The argument has been subsequently simplified in \cite{GGM} and \cite{HW2021}. More recently, it was extended to a more general class of potential in \cite{GGG2}. In three dimensions, the separation property has been shown only in \cite{AW} on the time interval $[T_s,\infty)$, where $T_s$ cannot be computed explicitly (see also \cite{LP} for a class of singular potentials different from \eqref{Log}). However, it still remains a major challenge to demonstrate the separation property for \eqref{CH}-\eqref{CH-bc-o}  for all positive times in three dimensions. 
Finally, we mention some recent results regarding the nonlocal-to-local
asymptotics obtained in \cite{DST2020, DST2021, GS2022}, 
That is, the weak solution to the nonlocal Cahn-Hillliard equation converges to the weak solution of the local Cahn-Hilliard equation, under suitable conditions on the data of the problem and a rescaling of the interaction kernel $J$.

\section{Separation property and H\"{o}lder regularity}
\label{lin-pro2} \setcounter{equation}{0}

In this section we provide an improved proof of the separation property for the
nonlocal Cahn-Hilliard equation in three dimensional domains. Then, we derive  some consequences on the regularity of the solution. 

Let us first recall the following well known result.
\begin{lemma}
\label{Iter} Let $\lbrace y_n \rbrace_{n \in \mathbb{N}_0} \subset \mathbb{R}%
_+$ satisfy the relation 
\begin{equation*}
y_{n+1}\leq C b^n y_n^{1+\epsilon},
\end{equation*}
for some $C>0$, $b>1$ and $\epsilon>0$. Assume that $y_0 \leq C^{-\frac{1%
}{\epsilon}} b^{-\frac{1}{\epsilon^2}}$. Then, we have 
\begin{equation*}
y_n \leq y_0 b^{-\frac{n}{\epsilon}}, \quad \forall \, n \geq 1.
\end{equation*}
In particular, $y_n \rightarrow 0$ as $n \rightarrow \infty$.
\end{lemma}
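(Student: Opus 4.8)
The plan is to establish the bound $y_n \le y_0 b^{-n/\epsilon}$ by induction on $n \in \mathbb{N}_0$. The base case $n = 0$ is the trivial equality $y_0 \le y_0 b^0$, so all the content lies in the inductive step, whose success rests on the elementary identity $1 - \tfrac{1+\epsilon}{\epsilon} = -\tfrac{1}{\epsilon}$, which collapses the powers of $b$ at exactly the right rate.

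For the step, suppose $y_n \le y_0 b^{-n/\epsilon}$. Substituting into the recursion $y_{n+1} \le C b^n y_n^{1+\epsilon}$ and expanding the exponents,
\[
y_{n+1} \le C b^n \left( y_0 b^{-n/\epsilon} \right)^{1+\epsilon} = C\, y_0^{1+\epsilon}\, b^{\,n - n(1+\epsilon)/\epsilon} = C\, y_0^{1+\epsilon}\, b^{-n/\epsilon}.
\]
To reach the desired bound $y_0 b^{-(n+1)/\epsilon}$ it then suffices to have $C\, y_0^{\epsilon} \le b^{-1/\epsilon}$, that is $y_0 \le C^{-1/\epsilon} b^{-1/\epsilon^2}$, which is precisely the assumed smallness of $y_0$. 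The crucial observation is that this threshold on $y_0$ is \emph{fixed} and is re-used unchanged at every stage of the induction, so that no accumulation of constants occurs and the single hypothesis propagates the geometric decay to all $n$.

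Finally, since $b > 1$ and $\epsilon > 0$ we have $b^{-n/\epsilon} \to 0$, hence $y_n \to 0$ as $n \to \infty$. There is no genuine obstacle in this argument; the only point demanding care is the exponent arithmetic in the displayed inequality, which must reproduce exactly the threshold $C^{-1/\epsilon} b^{-1/\epsilon^2}$ appearing in the statement, so that the induction is self-sustaining.
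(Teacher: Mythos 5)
Your induction is correct: the exponent arithmetic $n - n(1+\epsilon)/\epsilon = -n/\epsilon$ and the resulting sufficient condition $C\,y_0^{\epsilon} \leq b^{-1/\epsilon}$, i.e. $y_0 \leq C^{-1/\epsilon} b^{-1/\epsilon^{2}}$, are exactly right, and the fixed threshold indeed propagates through every step. The paper states this lemma as a well-known iteration result (of De Giorgi/Ladyzhenskaya type) without proof, and your argument is the standard one that establishes it, so there is nothing further to reconcile.
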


\begin{proof}[Proof of Theorem \protect\ref{SPHR}]
Let us report the well-posedness results from \cite[Theorems 3.4 and 4.1]{GGG}: there exists a unique weak solution $\phi: \Omega \times [0,\infty) \to \mathbb{R}$ to the system \eqref{nCH-2}-\eqref{nCH-mu} satisfying 
\begin{equation}
\label{reg-0}
\begin{split}
& \phi \in L^{\infty }(\Omega \times (0,\infty)): |\phi (x,t)|<1 \, \text{a.e. in }\Omega, \, \forall \, t>0,\\ 
& \phi \in L_{\rm loc}^{2}(0,\infty;H^{1}(\Omega ))\cap H^1_{\rm loc}(0,\infty; H^1(\Omega)'), \\
& \mu \in L_{\rm loc}^{2}(0,\infty;H^{1}(\Omega )),\quad F^{\prime }(\phi )\in
L_{\rm loc}^{2}(0,\infty;H^{1}(\Omega )),
\end{split}%
\end{equation}%
such that 
\begin{align}
& \left\langle \partial _{t}\phi ,v\right\rangle +(\nabla \mu ,\nabla
v)=0\quad \forall \,v\in H^{1}(\Omega ),\,\text{a.e. in }(0,\infty),
\label{weak-n} \\
& \mu =F^{\prime }(\phi )-J\ast \phi \quad \text{a.e. in }\ \Omega \times
(0,\infty),  \label{mu}
\end{align}%
and $\phi (\cdot ,0)=\phi _{0}(\cdot )$ in $\Omega $. Furthermore, for any $\tau \in (0,1)$ 
\begin{align}
& \sup_{t\geq \tau }\Vert \partial _{t}\phi (t)\Vert _{(H^{1}(\Omega
))^{\prime }}+\sup_{t\geq \tau }\Vert \partial _{t}\phi \Vert
_{L^{2}(t,t+1;L^{2}(\Omega ))}\leq \frac{C_{0}}{\sqrt{\tau }},
\label{reg1-n} \\
& \sup_{t\geq \tau }\Vert \mu (t)\Vert _{H^{1}(\Omega )}+\sup_{t\geq \tau
}\Vert \phi (t)\Vert _{H^{1}(\Omega )}+\sup_{t\geq \tau } \|F'(\phi) \|_{H^1(\Omega)} + \sup_{t\geq \tau }\Vert \mu \Vert
_{L^{2}(t,t+1;H^{2}(\Omega ))}\leq \frac{C_{0}}{\sqrt{\tau }},
\label{reg2-n}\\
&\sup_{t\geq \tau } \| \nabla \mu\|_{L^q(t,t+1;L^p(\Omega))}+ \| \nabla \phi\|_{L^q(t,t+1;L^p(\Omega))} \leq C_1(\tau), \quad \text{where } \, \frac{3p-6}{2p}=\frac{2}{q},\,  \forall \, p \in [2,6],
\label{reg3-n}
\end{align}%
where the positive constant $C_{0}$ only depends on $E_{NL}(\phi _{0})$, $%
\overline{\phi _{0}}$, $\Omega $ and the parameters of the system. The positive constant $C_1(\tau)$ also depends on the same quantities as $C_0$, in addition to $\tau$. Furthermore, the constants $C_0$ and $C_1$ are uniformly bounded in $\overline{\phi_0}$ if $\overline{\phi_0}$ lies in a compact set of $(-1,1)$.

In the first part of the proof, we show the separation property \eqref{SP}. To this end, we now introduce the iteration scheme \`{a} la De Giorgi devised in \cite[Section 4]{GGG2}. Let $\tau>0$ be fixed. We consider three positive parameters $T$, $\widetilde{\tau}$ and $\delta$ such that $T-3\widetilde{\tau }\geq \frac{\tau}{2}$ and $\delta \in \left(0,\min\lbrace \frac{\varepsilon_0}{2}, \varepsilon_1\rbrace \right)$ (cf. assumption (\textbf{A2})-(\textbf{A3})). The precise value of $\widetilde{\tau}$ and $\delta $ will be chosen afterwards. We define two sequences 
\begin{equation}
\begin{cases}
t_{-1}=T-3\widetilde{\tau } \\ 
t_{n}=t_{n-1}+\frac{\widetilde{\tau }}{2^{n}}%
\end{cases}%
\quad \forall \,n\geq 0,\quad \text{and} \quad k_{n}=1-\delta -\frac{\delta }{2^{n}},\quad
\forall \,n\geq 0.
\end{equation}%
Notice that 
\begin{equation}
t_{-1}<t_{n}<t_{n+1}<T-\widetilde{\tau },\quad \forall \,n\geq 0,\quad \text{such that} \quad 
t_{n}\rightarrow t_{-1}+2\widetilde{\tau }=T-\widetilde{\tau }\quad \text{as 
}n\rightarrow \infty ,
\end{equation}%
and 
\begin{equation}
1-2\delta \leq k_{n}<k_{n+1}<1-\delta ,\quad \forall \,n\geq 0,\quad \text{such that} \quad
k_{n}\rightarrow 1-\delta \quad \text{as }n\rightarrow \infty .
\end{equation}%
For $n\geq 0$, we introduce $\eta _{n}\in C^{1}(\mathbb{R})$ such that 
\begin{equation}
\eta _{n}(t)=%
\begin{cases}
1,\quad & t\geq t_{n} \\ 
0,\quad & t\leq t_{n-1}%
\end{cases}%
\quad \text{and}\quad |\eta _{n}^{\prime }(t)|\leq 2\frac{2^{n}}{\widetilde{%
\tau }}.  \label{eta-def}
\end{equation}%
Next, for $n\geq 0$, we consider the function
\begin{equation*}
\phi _{n}(x,t)=\max \{\phi (x,t)-k_{n},0\}=(\phi -k_{n})_{+}.
\end{equation*}%
Consequently, we introduce the sets 
\begin{equation*}
I_{n}=[t_{n-1},T] \quad \text{and} \quad 
A_{n}(t)=\{x\in \Omega :\phi (x,t)-k_{n}\geq 0\},\quad \forall \,t\in I_{n}.
\end{equation*}%
If $t\in \lbrack 0,t_{n-1})$, we set $A_{n}(t)=\emptyset$. We observe that
we have 
\begin{equation}
I_{n+1}\subseteq I_{n},\quad \forall \,n\geq 0,\qquad I_{n}\rightarrow
\lbrack T-\widetilde{\tau },T]\quad \text{as }n\rightarrow \infty ,
\label{I-set}
\end{equation}%
and 
\begin{equation}
A_{n+1}(t)\subseteq A_{n}(t),\quad \forall \,n\geq 0,\,t\in I_{n+1}.
\label{A-set}
\end{equation}%
The last ingredient is
\begin{equation*}
y_{n}=\int_{I_{n}}\int_{A_{n}(s)}1\,\mathrm{d}x\,\mathrm{d}s,\quad \forall
\,n\geq 0.
\end{equation*}%
For any $n\geq 0$, we choose as test function $v=\phi _{n}\eta _{n}^{2}$ in %
\eqref{weak-n}. Integrating over $[t_{n-1},t]$, where $t_{n}\leq t\leq T$,
we obtain the relation
\begin{equation}
\label{EQ.1}
\int_{t_{n-1}}^{t}\left\langle \partial _{t}\phi ,\phi _{n}\,\eta
_{n}^{2}\right\rangle \,\mathrm{d}s+\int_{t_{n-1}}^{t}\int_{A_{n}(s)}
\nabla F'(\phi ) \cdot \nabla \phi _{n}\,\eta _{n}^{2}\,%
\mathrm{d}x\,\mathrm{d}s=\int_{t_{n-1}}^{t}\int_{A_{n}(s)}(\nabla J\ast \phi
)\cdot \nabla \phi _{n}\,\eta _{n}^{2}\,\mathrm{d}x\,\mathrm{d}s.
\end{equation}%
Since $F'(\phi)\in L^\infty(\tau,\infty; H^1(\Omega))$ and $|\lbrace x \in \Omega: |\phi(x,t)|=1 \rbrace|=0$ for all $t \geq \tau$, we deduce from \cite{MM} that $h_k(F'(\phi))\in L^\infty(\tau,\infty; H^1(\Omega)\cap L^\infty(\Omega))$, where 
$$
h_k: \mathbb{R}\to \mathbb{R}, \quad h_k(s)=
\begin{cases}
k, \quad &s\geq k,\\
s, \quad &s \in (-k,k),\\
k, \quad &s \leq -k,
\end{cases}
\quad \forall \, k \in \mathbb{N}.
$$
Then, it follows that $h_k(F'(\phi))\to F'(\phi)$ almost everywhere in $\Omega$ and for all $t \geq \tau$, and $\nabla(h_k(F'(\phi)))=F''(\phi)\nabla \phi \,  1_{\lbrace |F'(\phi)| <k \rbrace}(\cdot) \to F''(\phi) \nabla \phi$  almost everywhere in $\Omega$ and for all $t \geq \tau$. Thus, by the monotone convergence theorem, $\int_\Omega |F''(\phi(t))\nabla \phi(t)|^2 \, \d x\leq \lim_{k\rightarrow \infty} \| h_k(F'(\phi(t)))\|_{H^1(\Omega)}^2 = \|F'(\phi(t))\|_{H^1(\Omega)}^2<\infty$, for all $t\geq \tau$. As consequence, it is easily seen that $\nabla F^{\prime }(\phi )=F^{\prime
\prime }(\phi )\nabla \phi $ in distributional sense. Thanks to this, we rewrite \eqref{EQ.1} as 
\begin{equation*}
\int_{t_{n-1}}^{t}\left\langle \partial _{t}\phi ,\phi _{n}\,\eta
_{n}^{2}\right\rangle \,\mathrm{d}s+\int_{t_{n-1}}^{t}\int_{A_{n}(s)}F^{%
\prime \prime }(\phi )\nabla \phi \cdot \nabla \phi _{n}\,\eta _{n}^{2}\,%
\mathrm{d}x\,\mathrm{d}s=\int_{t_{n-1}}^{t}\int_{A_{n}(s)}(\nabla J\ast \phi
)\cdot \nabla \phi _{n}\,\eta _{n}^{2}\,\mathrm{d}x\,\mathrm{d}s.
\end{equation*}%
Notice that
\begin{equation*}
\int_{t_{n-1}}^{t}\left\langle \partial _{t}\phi ,\phi _{n}\eta
_{n}^{2}\right\rangle \,\mathrm{d}s=\frac{1}{2}\Vert \phi _{n}(t)\Vert
_{L^{2}(\Omega )}^{2}-\int_{t_{n-1}}^{t}\Vert \phi _{n}(s)\Vert
_{L^{2}(\Omega )}^{2}\,\eta _{n}\,\partial _{t}\eta _{n}\,\mathrm{d}s.
\end{equation*}%
Also, by the choice of $\delta$, the assumption (\textbf{A2}) and the fact $A_n(t) \subseteq A_0(t)$ for $t\geq t_{n-1}$, we have
\begin{equation*}
\int_{t_{n-1}}^{t}\int_{A_{n}(s)}F^{\prime \prime }(\phi )\nabla \phi \cdot
\nabla \phi _{n}\,\eta _{n}^{2}\,\mathrm{d}x\,\mathrm{d}s\geq F^{\prime
\prime }\left( 1-2\delta \right) \int_{t_{n-1}}^{t}\Vert \nabla \phi
_{n}\Vert _{L^{2}(\Omega )}^{2}\,\eta _{n}^{2}\,\mathrm{d}s.
\end{equation*}%
Thus, we end up with
\begin{align*}
\frac12 \|\phi _{n}(t)\|_{L^{2}(\Omega )}^{2}& +F'' \left( 1-2\delta \right) \int_{t_{n-1}}^{t} \|\nabla \phi _{n}\|_{L^{2}(\Omega )}^{2} \, \eta _{n}^{2} 
\, \d s \\
& \leq \underbrace{\int_{t_{n-1}}^{t}\int_{A_{n}(s)}(\nabla J\ast \phi
)\cdot \nabla \phi _{n}\eta _{n}^{2}\,\d x \, \d s}_{I_{1}}+
\underbrace{\int_{t_{n-1}}^{t}\|\phi _{n}(s)\| _{L^{2}(\Omega
)}^{2}\eta _{n}\partial _{t}\eta _{n}\,\d s}_{I_{2}}, \quad \forall \,  t \in [t_{n},T].
\end{align*}%
We now observe that 
$$
\sup_{x \in \Omega} |(\nabla J\ast \phi)(x)|
=\sup_{x \in \Omega}\left| \int_\Omega \nabla J(x-y) \phi(y) \, \d y\right|
\leq \sup_{x \in \Omega} \int_\Omega |\nabla J(x-y)| \, \d y
= \sup_{x \in \Omega} \int_{x-\Omega} |\nabla J(z)| \, \d z.
$$
Since $\Omega$ is bounded, there exists $M>0$ such that $\Omega \subseteq B_M(\mathbf{0})$. Also, $\mathrm{diam}(\Omega)<\infty$. Then, there exists $M_1$ such that the set $x-\Omega \subset B_{M_1}(\mathbf{0}) $  for any $x\in \Omega$. It follows that 
\begin{equation}
\label{nablaJ}
\| \nabla J \ast \phi\|_{L^\infty(\Omega)}\leq \int_{B_{M_1}(\mathbf{0})} |\nabla J(z)|\, \d z= \| \nabla J\|_{L^1(B_{M_1}(\mathbf{0}))}.
\end{equation}
For simplicity of notation, we will use $B_{M_1}$ to denote $B_{M_1}(\mathbf{0})$. A similar argument applies for $\| J \ast \phi\|_{L^\infty(\Omega)}$.
Concerning the first term $I_1$, we obtain as in \cite[Section 4]{GGG2} that
\begin{align*}
I_{1}& 
=\int_{t_{n-1}}^{t}\int_{A_{n}(s)}(\nabla J\ast \phi )\,\eta
_{n}\cdot \nabla \phi \,\eta _{n}\,\d x\,\d s \\
& \leq \frac{1}{2}F''(1-2\delta) \int_{t_{n-1}}^{t}\| \nabla \phi _{n}\|_{L^{2}(\Omega )}^{2}\,\eta_{n}^{2}\,\d s
+\frac{1}{2}\frac{1}{F''(1-2\delta)} \int_{t_{n-1}}^{t}\int_{A_{n}(s)}|\nabla J\ast \phi |^{2}\,\eta_{n}^{2} \, \d x \,\d s \\
& \leq \frac{1}{2}F''(1-2\delta) \int_{t_{n-1}}^{t}\| \nabla \phi _{n}\|_{L^{2}(\Omega )}^{2}\,\eta_{n}^{2}\, \d s
+\frac{1}{2}\frac{1}{F''(1-2\delta)}
\int_{t_{n-1}}^{t}\| \nabla J\ast \phi \|_{L^{\infty}(\Omega )}^{2} 
\int_{A_{n}(s)}1 \, \d x \, \d s \\
& \leq \frac{1}{2}F''(1-2\delta)
\int_{t_{n-1}}^{t}\Vert \nabla \phi _{n}\Vert _{L^{2}(\Omega )}^{2}\,\eta
_{n}^{2} \, \d s
+\frac{1}{2}\frac{\Vert \nabla J\Vert _{L^{1}(B_{M_1}
)}^{2}}{F''(1-2\delta)}\int_{I_{n}}%
\int_{A_{n}(s)}1\,\d x \, \d s \\
& \leq \frac{1}{2}F^{\prime \prime }\left( 1-2\delta \right)
\int_{t_{n-1}}^{t}\Vert \nabla \phi _{n}\Vert _{L^{2}(\Omega )}^{2}\,\eta
_{n}^{2}\,\mathrm{d}s+\frac{1}{2}\frac{\Vert \nabla J\Vert _{L^{1}(B_{M_1})}^{2}}{F^{\prime \prime }\left( 1-2\delta \right) } \, y_{n}.
\end{align*}%
This is actually a correction of the argument in \cite{GGG} and \cite{P} where $\Vert \nabla J\Vert _{L^{1}(\Omega)}$ appears in the estimate analogous to the one above, instead of $\Vert \nabla J\Vert _{L^{1}(B_{M_1})}$. 
In order to handle the term $I_2$, we recall the main observation in \cite{P}:
\begin{equation}
\label{PO}
0\leq \phi_n \leq 2\delta \quad \text{a.e. in } \Omega, \, \forall \, t \in [T-2\widetilde{\tau}, T].
\end{equation}
By exploiting \eqref{eta-def} and \eqref{PO}, we simply have
\begin{equation*}
I_{2}\leq \frac{2^{n+1}}{\widetilde{\tau }}\int_{t_{n-1}}^{t}\int_{A_{n}(s)}%
\phi _{n}^{2}\,\d x\,\d s\leq \frac{2^{n+1}}{\widetilde{\tau }}%
\int_{I_{n}}\int_{A_{n}(s)} (2\delta)^2 \,\mathrm{d}x\,\mathrm{d}s
=\frac{2^{n+3}}{\widetilde{\tau }} \delta^2 y_{n}.
\end{equation*}%
Collecting the above estimates together, we infer that 
\begin{equation}
\Vert \phi _{n}(t)\Vert _{L^{2}(\Omega )}^{2}+F^{\prime \prime }\left(
1-2\delta \right) \int_{t_{n-1}}^{t}\Vert \nabla \phi _{n}\Vert
_{L^{2}(\Omega )}^{2}\,\eta _{n}^{2}\,\mathrm{d}s\leq \frac{\Vert \nabla
J\Vert _{L^{1}(B_{M_1} )}^{2}}{F^{\prime \prime }\left( 1-2\delta \right) }%
y_{n}+ 2^4 \frac{2^{n}}{\widetilde{\tau }} \delta^2 y_{n},  \quad \forall \, t\in [t_{n},T].
\label{L2-n}
\end{equation}%
As a consequence, 
\begin{equation}
\max_{t\in I_{n+1}}\Vert \phi _{n}(t)\Vert _{L^{2}(\Omega )}^{2}\leq
X_{n},\quad F^{\prime \prime }\left( 1-2\delta \right) \int_{I_{n+1}}\Vert
\nabla \phi _{n}\Vert _{L^{2}(\Omega )}^{2}\,\mathrm{d}s\leq X_{n},
\label{X_n}
\end{equation}%
where
\begin{equation}
\label{X_n-def}
X_{n}:=2^{n}\max \left\lbrace\frac{\Vert \nabla J\Vert _{L^{1}(B_{M_1}
)}^{2}}{F''\left( 1-2\delta \right) },\frac{2^4 \delta^2}{\widetilde{%
\tau }}\right\rbrace y_{n}.
\end{equation}
Now, in light of (\textbf{A3}), we observe that $\frac{\Vert \nabla J\Vert _{L^{1}(B_{M_1})}^{2}}{F''\left( 1-2\delta \right)}\leq C_F \delta \Vert \nabla J\Vert _{L^{1}(B_{M_1})}^{2}$, thereby
\begin{equation}
\label{X_n-2}
X_{n}=2^{n} \frac{\Vert \nabla J\Vert _{L^{1}(B_{M_1}
)}^{2}}{F''\left( 1-2\delta \right) } \, y_n, \quad \text{provided that}\quad
 \widetilde{\tau}\geq \frac{2^4 \delta}{C_F \Vert \nabla J\Vert _{L^{1}(B_{M_1})}^{2}}.
\end{equation}
The latter constraint will be verified later on. 

Next, for $t\in I_{n+1}$ and for almost every $x\in A_{n+1}(t)$, following \cite[Section 4]{GGG2}, we observe that 
\begin{align*}
\phi _{n}(x,t)& =\phi (x,t)-\left[ 1-\delta -\frac{\delta }{2^{n}}\right] \\
& =\underbrace{\phi (x,t)-\left[ 1-\delta -\frac{\delta }{2^{n+1}}\right] }%
_{=\phi _{n+1}(x,t)\geq 0}+\delta \left[ \frac{1}{2^{n}}-\frac{1}{2^{n+1}}%
\right] \geq \frac{\delta }{2^{n+1}},
\end{align*}%
which implies that
\begin{equation}
\begin{split}
\int_{I_{n+1}}\int_{\Omega }|\phi _{n}|^{\frac{10}{3}}\,\mathrm{d}x\,\mathrm{d}s& \geq
\int_{I_{n+1}}\int_{A_{n+1}(s)}|\phi _{n}|^{\frac{10}{3}}\,\mathrm{d}x\,\mathrm{d}s \\
& \geq \left( \frac{\delta }{2^{n+1}}\right)
^{\frac{10}{3}}\int_{I_{n+1}}\int_{A_{n+1}(s)}1\,\mathrm{d}x\,\mathrm{d}s=\left( \frac{%
\delta }{2^{n+1}}\right) ^{\frac{10}{3}}y_{n+1}.
\end{split}
\label{m-pow-new}
\end{equation}
In order to proceed with the next step, we recall the following Gagliardo-Nirenberg inequality in three dimensions
\begin{equation}
\label{GN}
\| u\|_{L^\frac{10}{3}(\Omega)}\leq C_{\Omega} \| u\|_{L^2(\Omega)}^\frac25 \| u\|_{H^1(\Omega)}^\frac35, \quad \forall \, u \in H^1(\Omega).
\end{equation}
Exploiting the definition of $y_{n}$, %
\eqref{I-set} and \eqref{GN}, we have
\begin{align*}
y_{n+1}\left( \frac{\delta }{2^{n+1}}\right) ^{\frac{10}{3}}
& \leq \int_{I_{n+1}}\int_{A_{n}(s)}|\phi _{n}|^{\frac{10}{3}}\,\mathrm{d}x\,%
\mathrm{d}s \\
&\leq C_\Omega  \int_{I_{n+1}} \| \phi_n\|_{L^2(\Omega)}^\frac43 \left( \| \nabla \phi_n\|_{L^2(\Omega)}^2 + \| \phi_n\|_{L^2(\Omega)}^2 \right) \, \d s\\
&\leq C_\Omega \underbrace{\int_{I_{n+1}} \| \phi_n\|_{L^2(\Omega)}^\frac43 \| \nabla \phi_n\|_{L^2(\Omega)}^2  \, \d s}_{A}
+ C_\Omega \underbrace{\int_{I_{n+1}} \| \phi_n\|_{L^2(\Omega)}^\frac43  \| \phi_n\|_{L^2(\Omega)}^2  \, \d s}_{B}.
\end{align*}%
As in \cite{GGG2}, we infer from \eqref{X_n} that
\begin{equation*}
A\leq \frac{1}{F^{\prime \prime }\left( 1-2\delta \right) }\max_{t\in
I_{n+1}}\Vert \phi _{n}(t)\Vert _{L^{2}(\Omega )}^{\frac43}F^{\prime \prime
}\left( 1-2\delta \right) \int_{I_{n+1}}\Vert \nabla \phi _{n}\Vert
_{L^{2}(\Omega )}^{2}\,\mathrm{d}s\leq \frac{1}{F^{\prime \prime }(1-2\delta
)}X_{n}^{\frac53}.
\end{equation*}%
On the other hand, by using \eqref{I-set} and \eqref{PO}, we notice that
\begin{align*}
B& \leq \max_{t\in
I_{n+1}}\Vert \phi _{n}(t)\Vert _{L^{2}(\Omega )}^{\frac43} \int_{I_{n}} \| \phi_n\|_{L^2(\Omega)}^2 \, \d s 
\leq (2\delta)^2 X_n^{\frac23}  \int_{I_{n}} \int_{A_n(s)} 1 \, \d x \, \d s 
= (2\delta)^2 X_n^{\frac23} y_n.
\end{align*}%
Thus, thanks to \eqref{X_n-2}, and making use of (\textbf{A3}), we find 
\begin{align*}
y_{n+1}\left( \frac{\delta }{2^{n+1}}\right) ^{\frac{10}{3}}
&\leq \left[ \frac{C_\Omega \| \nabla J\|_{L^1(B_{M_1})}^\frac{10}{3}}{(F''(1-2\delta))^\frac{8}{3}} \, 2^{\frac53 n} + \frac{4 C_\Omega \delta^2 \| \nabla J\|_{L^1(B_{M_1})}^\frac43}{(F''(1-2\delta))^\frac{2}{3}}\, 2^{\frac23 n} \right] y_n^\frac53\\
&\leq 
4 C_\Omega C_F^\frac83  \underbrace{\max\left\lbrace \| \nabla J\|_{L^1(B_{M_1})}^\frac{10}{3}, \| \nabla J\|_{L^1(B_{M_1})}^\frac{4}{3}\right\rbrace}_{C_J}   \, \delta^\frac83 \, 2^{\frac53 n} y_n^\frac53,
\end{align*}
which is equivalent to
\begin{align*}
y_{n+1}
&\leq 
\frac{2^\frac{16}{3} C_\Omega C_F^\frac83  C_J}{\delta^\frac23} \, 2^{5 n} y_n^\frac53.
\end{align*}
An application of Lemma \ref{Iter} with
$$
C= \frac{2^\frac{16}{3} C_\Omega C_F^\frac83  C_J}{\delta^\frac23}, 
\quad b=2^5, \quad \epsilon=\frac23
$$
entails that $y_n \rightarrow 0$ provided that
\begin{equation}
\label{y_0}
y_0 \leq \frac{\delta}{2^8 C_\Omega^\frac32 C_F^4 C_J^\frac32} \frac{1}{2^\frac{45}{4}}= \frac{\delta}{2^\frac{77}{4} C_\Omega^\frac32 C_F^4 C_J^\frac32}.
\end{equation}
We conclude from $y_n\rightarrow 0$ and
$
y_{n}\rightarrow \Big|\Big\lbrace(x,t)\in \Omega \times \lbrack T-\widetilde{%
\tau },T]:\phi (x,t)\geq 1-\delta \Big\rbrace\Big|$,  as $n\rightarrow \infty$, that
\begin{equation}
\label{SP-prep}
\Vert (\phi -(1-\delta ))_{+}\Vert _{L^{\infty }(\Omega \times (T-\widetilde{%
\tau },T))}=0.
\end{equation}
We are left to show that \eqref{y_0} is satisfied. Recalling (\textbf{A3}), \eqref{reg2-n} and $y_0= \int_{T-3\widetilde{\tau }}^{T}\int_{A_{0}(s)}1 \, \mathrm{d}x \, \mathrm{d}s$, we notice that (cf. \cite{GGG2,P}) 
\begin{equation}
\begin{split}
\int_{T-3\widetilde{\tau }}^{T}\int_{A_{0}(s)}1\,\mathrm{d}x\,\mathrm{d}s&
\leq \frac{\int_{T-3\widetilde{\tau }}^{T}\Vert F^{\prime}(\phi
(s))\Vert _{L^{1}(\Omega )} \, \mathrm{d}s}{|F^{\prime
}(1-2\delta )|} \\
& \leq  3 \widetilde{\tau} \Vert F^{\prime }(\phi )\Vert _{L^\infty(\frac{\tau}{2}, \infty; L^{1}(\Omega))} 
\frac{C_F}{|\ln(\delta)|}
= \frac{3 C_F \, C(E_{NL}(\phi_0),\tau) \widetilde{\tau}}{|\ln (\delta)|}.
\end{split}
\label{y0-bound}
\end{equation}
Thus, we impose that 
\begin{equation}
\label{tau-t-2}
\frac{3 C_F \, C(E_{NL}(\phi_0), \tau) \widetilde{\tau}}{|\ln (\delta)|} \leq \frac{\delta}{2^\frac{77}{4} C_\Omega^\frac32 C_F^4 C_J^\frac32}.
\end{equation}
In light of \eqref{X_n-2} and \eqref{tau-t-2}, we choose $\delta$ sufficiently small such that $\widetilde{\tau}$ satisfies the relations
\begin{equation}
\label{delta-def}
\frac{2^4 \delta}{C_F \Vert \nabla J\Vert _{L^{1}(B_{M_1})}^{2}} \leq
\widetilde{\tau} \leq \frac{\delta |\ln(\delta)|}{3 \, 2^\frac{77}{4} C_\Omega^\frac32 C_F^5 C_J^\frac32 C(E_{NL}(\phi_0), \tau)}.
\end{equation}
Now, set $T=\tau+\frac{\widetilde{\tau}}{2}$. Up to eventually reducing $\delta$ to get $\widetilde{\tau}$ even smaller, we clearly have $\tau-\frac{5\widetilde{\tau}}{2}\geq \frac{\tau}{2}$. Therefore, by \eqref{SP-prep}, we deduce that
$\Vert (\phi -(1-\delta ))_{+}\Vert _{L^{\infty }(\Omega \times (\tau-\frac{\widetilde{\tau}}{2},\tau+\frac{\widetilde{\tau}}{2}))}=0$. We point out that the value of $\widetilde{\tau}$ is independent of the choice of $T$. Thus, repeating the same argument on intervals of size $\widetilde{\tau}$, we conclude that  $\Vert (\phi -(1-\delta ))_{+}\Vert _{L^{\infty }(\Omega \times (\tau-\frac{\widetilde{\tau}}{2}, \infty)}=0$.
Finally, repeating the same argument for $(\phi+(-1+\delta))_{-}$, we arrive at the desired conclusion \eqref{SP}. It is important to highlight that the value of $\delta$ only depends on $F$, $J$, $\Omega$, $E_{NL}(\phi_0)$ and $\tau$.

The rest of the proof is devoted to the additional regularity results \eqref{MU-infty} and \eqref{PHI-hol} that can be inferred once the separation property is established. Firstly, by definition of $\mu$ in \eqref{nCH-2}, we observe that 
$$
\sup_{t\geq \tau} \| \mu (t)\|_{L^\infty}\leq \sup_{t\geq \tau} \left( \| F'(\phi(t))\|_{L^\infty(\Omega)} + \| J\ast \phi(t)\|_{L^\infty(\Omega)}\right) 
\leq |F'(1-\delta)| + \| J\|_{L^1(B_{M_1})}=:C_1.
$$
Let us observe that
\begin{equation}
\partial _{t}^{h}\mu(\cdot) =\partial _{t}^{h}\phi(\cdot) \left( \int_{0}^{1}F^{\prime
\prime }(s \phi(\cdot +h) +(1-s)\phi(\cdot) )\,\mathrm{d}s\right) -J\ast \partial
_{t}^{h}\phi(\cdot) ,\quad 0<t\leq T-h.  \label{a}
\end{equation}%
By \eqref{SP}, $\Vert s \phi(\cdot +h) +(1-s)\phi(\cdot)  \Vert
_{L^{\infty }(\Omega \times (\tau,\infty))}\leq 1-\delta $ for all $s\in (0,1)$.
Then, exploiting that $\Vert \partial
_{t}^{h}\phi \Vert _{L^{2}(0,T-h;L^{2}(\Omega ))}\leq \Vert \partial
_{t}\phi \Vert _{L^{2}(0,T;L^{2}(\Omega ))}$, we infer from \eqref{reg1-n} that
$\sup_{t\geq \tau}
\Vert \partial
_{t}^{h}\mu \Vert _{L^{2}(t,t+1;L^{2}(\Omega ))}\leq C_2,
$
where $C_2>0$ depends on $C_0$, $\tau$, $\delta$ and $J$, but is independent of $h$, . This implies that $\partial _{t}\mu \in L^{2}(0,T;L^{2}(\Omega ))$ for any $T>0$, and $\sup_{t\geq \tau} \| \partial_t \mu\|_{L^{2}(t,t+1;L^{2}(\Omega)}\leq C_2$.

Secondly, we study the H\"{o}lder continuity in both time and space. We notice that \eqref{nCH} is a quasi-linear equation with principal part in divergence form. Following the notation in the book \cite{LSU}, we define 
$a_l(x,t,u,p)=\widetilde{F}''(u) p_l -(\partial_l J \ast \phi(\cdot,t))(x)$, where $\widetilde{F}$ is the restriction of $F$ in $[-1+\delta,1-\delta]$. In light of the convexity of $F$ and $|\widetilde{F}''(s)|\leq |F''(1-\delta)|$, for all $s\in [-1+\delta,1-\delta]$, we deduce that
$$
a_l(x,t,u,p)p_l\geq \frac{\theta}{2} |p|^2 - \frac{1}{2\theta}\| \nabla J\|_{L^1(B_{M_1)}}, \quad |a_l(x,t,u,p)|\leq |F''(1-\delta)| |p| + \| \nabla J\|_{L^1(B_{M_1)}}.
$$
We also note that the solution $\phi$ satisfying \eqref{reg-0}-\eqref{reg3-n} is a {\it bounded generalized solution} in the sense of \cite[Chapter V]{LSU}. Thus, by \cite[Theorem 1.1, Chapter V]{LSU}, we deduce that \eqref{PHI-hol} holds in $\Omega'\times [t,t+1]$ , for any $t\geq \tau$ and $\Omega'\subset \Omega$ separated from $\partial \Omega$. In order to achieve \eqref{PHI-hol} up to the boundary, we make use of \cite[Corollary 4.2]{Dung}, which provides the desired conclusion under the same assumptions. It is worth noticing that the constant $C_3$ and the parameter $\alpha$ from both \cite{LSU} and \cite{Dung} only depends on $\delta$, $\theta$, $\| \nabla J\|_{L^1(B_{M_1})}$ and $\Omega$. 
This completes the proof.
\end{proof}

\section{On the regularity of the global attractor}

This section is devoted to some regularity properties of the global attractor $\mathcal{A}_m$ for the dynamical system $(\mathcal{H}_m, S(t))$ stated in Theorem \ref{GA}.

\begin{proof}[Proof of Theorem \ref{GA}] 
Let us consider $\phi^\star \in \mathcal{A}_m$. It is clear that $\| \phi^\star\|_{L^\infty(\Omega)}\leq 1$ such $|\overline{\phi^\star}|\leq 1-m$ and $\| \phi^\star\|_{H^1(\Omega)}\leq N_1$, where $N_1$ is a universal constant (namely, it does not depend on $\phi^\star$). We observe that $|E_{NL}(\phi^\star)|\leq N_2$, where $N_2$ is a universal constant depending only on $\| J\|_{L^1(B_{M_1})}$ (cf. \eqref{nablaJ}) and $\max_{s\in [-1,1]} |F(s)|$. 
Then, applying Theorem \ref{SPHR}, we deduce that 
\begin{equation}
\| S(t)\phi^\star\|_{L^\infty(\Omega)}\leq 1-\delta, \quad \forall \, t \geq [1,\infty).
\end{equation}
Here, $\delta$ depends on the constants in \eqref{delta-def}. In particular, since $|\overline{\phi^\star}|\leq 1-m$, it is easily seen that $C(E_{NL}(\phi^\star),1)\leq N_3$, where $N_3$ is a universal constant. This implies that $\delta$ is a universal constant. Thanks to the arbitrary of $\phi^\star$ in the above argument, we deduce that
$$
\mathcal{A}_m=S(1)\mathcal{A}_m \subset B_{L^\infty(\Omega)}(0,1-\delta).
$$
Next, by the second part of Theorem \ref{SPHR}, we infer from \eqref{PHI-hol} and \eqref{SP-2} that 
$$
\| S(t)\phi^\star\|_{C^\alpha(\overline{\Omega})} = 
\| S(t)\phi^\star \|_{C(\overline{\Omega})}+ \sup_{x,y\in \overline{\Omega}, x\neq y} \frac{|(S(t)\phi^\star)(x)-(S(t)\phi^\star)(y)|}{|x-y|^\alpha}
\leq 1-\delta + C_3=: N_4.
$$
Notice that $N_4$ is a universal constant which depends only on $N_2$, $\delta$, $m$ and the parameters of the system (namely, $F$, $J$, $\Omega$). Thus, since the constant are independent of $\phi^\star$, we conclude that $\mathcal{A}_m=S(1)\mathcal{A}_m\subset B_{C^\alpha(\overline{\Omega})}(0,N_4)$. The proof is complete.
\end{proof}

\textbf{Acknowledgment.} AG is a member of Gruppo Nazionale per
l'Analisi Ma\-te\-ma\-ti\-ca, la Probabilit\`{a} e le loro Applicazioni
(GNAMPA), Istituto Nazionale di Alta Matematica (INdAM). 



\end{document}